\newif\ifprivate
\newtheorem{theorem}{Theorem}
\newtheorem{lemma}[theorem]{Lemma}
\newtheorem{definition}[theorem]{Definition}
\newtheorem{proposition}[theorem]{Proposition}
\numberwithin{equation}{section}
\def\C{\mathbb{C}}
\newcommand{\calT}{\mathcal{T}}
\newcommand{\height}{\mathsf{height}}
\ifprivate \thispagestyle{myheadings}\pagestyle{myheadings} \markboth{\jobname{}
\newtheorem*{todoenvironment}{TODO}
\newcounter{epsilon}
\newcommand{\neweps}{\refstepcounter{epsilon}\varepsilon_{\arabic{epsilon}}}
\begin{document}
\title[The number of Huffman codes, compact trees, and sums of unit fractions]
{The number of Huffman codes, compact trees,\\ and sums of unit fractions}

\author{Christian Elsholtz}
\address{Institut f\"ur Mathematik A, Steyrergasse 30/II,
Technische Universit\"at  Graz,
A-8010 Graz, Austria}
\email{elsholtz@math.tugraz.at}
\author{Clemens Heuberger}
\address{Institut f\"ur Mathematik B, Steyrergasse 30/II,
Technische Universit\"at  Graz,
A-8010 Graz, Austria}
\email{clemens.heuberger@tugraz.at}
\author{Helmut Prodinger}
\address{
Helmut Prodinger,
Department of Mathematics,
University of Stellenbosch,
7602 Stellenbosch, South Africa}
\email{hproding@sun.ac.za}

\begin{abstract}
The number of ``nonequivalent'' 
Huffman codes of length $r$ 
over an alphabet of size $t$ has been studied frequently.
Equivalently, the number of ``nonequivalent'' complete $t$-ary trees
has been examined. We first survey the literature, 
unifying several independent approaches to the problem. 
Then, improving on earlier work 
we prove a very precise asymptotic result on the counting function, 
consisting of two main terms and an error term.

\end{abstract}
\subjclass[2010]{Primary 05A16; Secondary: 
05A15, 05C05, 05C30, 11D68, 68P30, 68R10, 94A10}

\maketitle
\section{Introduction}
\subsection{A problem in coding theory}
Let a source $S$ emit $r$ words $w_1, \ldots , w_r$ with probabilities
$p_1, \ldots , p_r$ respectively. Here 
$0\leq p_i\leq 1$ and $\sum_{i=1}^r p_i=1$.
For each word $w_i$ we assign a code word $c_i=c_i(w_i)$ over an 
alphabet of size $t$. Let $l_i$ denote the
length of the codeword $c_i$.
For a given source $S$, a compact code minimises the average 
length $\overline{l}=\sum_{i=1}^r p_i l_i$.
Huffman \cite{Huffman:1952} showed how to construct
a code with minimum average word length, given the word probabilities $p_i$. 
These Huffman codes are prefix-free, and can therefore be
decoded instantaneously. Moreover these codes can be found efficiently.

The Kraft-McMillan inequality states:
For an alphabet of size $t$ and a source that emits $r$ words, 
a necessary and sufficient condition
for the existence of an instantaneous code with code word lengths 
$l_1, \ldots , l_r$ is that
\begin{equation}{\label{kraft-inequality}}
 \sum_{i=1}^r \frac{1}{t^{l_i}} \leq 1.
\end{equation}
Moreover, for the existence of a uniquely decipherable code inequality 
(\ref{kraft-inequality}) is necessary.

Let us call a code \emph{compact} if it satisfies the Kraft equality:
\begin{equation}{\label{kraft-equality}}
 \sum_{i=1}^r \frac{1}{t^{l_i}} =1.
\end{equation}

When multiplying the equation by $t^{l_r}$ we observe 
that in a compact code the number of codewords of 
maximal length $l_r$ is divisible by $t$.
Also, if there are two distinct codewords starting with the same prefix 
$a_1 \ldots a_q$ but then continuing differently,
$a_1 \ldots a_q b_1\ldots $ and $a_1 \ldots a_q b_2\ldots$,
then all $t$ possible symbols must occur at position $q+1$.
In other words, if a sequence branches, it branches into all $t$ possible 
directions. This is the reason why it
is possible to model the situation by means of a rooted $t$-ary tree, which we
do below. As it is possible to arrive from a given Huffman code at a solution
of equation (\ref{kraft-equality}), and vice versa, 
to arrive from a solution to this equation at an
admissible Huffman code it is natural to consider all Huffman codes with the
same set of word lengths as ``equivalent''codes.

Example:
Let $t=3$.
Let the code consist of the codewords:
\[00, 010, 011, 012, 02, 1, 20, 21, 220, 221, 222.\]
The code can be nicely represented by the tree in Figure~\ref{tree1}.

\begin{figure}[h]
\caption{Rooted tree corresponding to the code
$\{00, 010, 011, 012, 02, 1, 20, 21, 220, 221, 222\}.$
}
{\label{tree1}}
\[ \includegraphics[width=
0.25\textwidth]{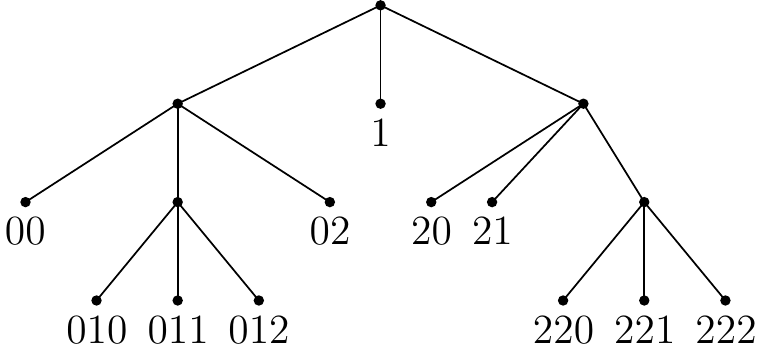}
\]
\end{figure}

Below we list a number of alternative ways
of defining our main object.
This reflects that the same type of question has been studied from
various points of view, often without being aware of the corresponding
 results expressed in a different mathematical language.

We use Kraft's equality as the basis for our first definition.
It stresses the number theoretic properties and was at the
origin of the Boyd's \cite{Boyd:1975} work.
\begin{definition}[Number theoretic definition]{\label{def:boydunitfraction}}
Let $f_t(r)$ denote the number of solutions of the equation
\[ \sum_{i=1}^r \frac{1}{t^{x_i}}=1,\]
where the $x_i$ are nonnegative integers and 
$0 \leq x_1 \leq \cdots \leq x_r$.
\end{definition}
For more information on other counting functions related 
to representations of one as a sum of unit fractions, 
see \cite{BrowningandElsholtz} and \cite{ChenandElsholtzandJiang}.

Collecting the number of words of the same length (corresponding to $x_i$ 
in the last definition), one arrives at an
alternative definition:
From our point of view, all codes with the same number of words of a given
length are equivalent. This suggests the following definition:
\begin{definition}[Huffman sequences]{\label{def:Huffmansequence}}
Let $t\geq 2$ and $r\geq 1$ be positive  integers. 
Let $f_t(r)$ denote the number of sequences of non-negative integers
\[(a_0, a_1, \ldots , a_l), 
\quad l\geq 0, \, a_l>0, \quad \sum_{i=0}^l a_i=r,\quad  
\sum_{i=0}^l \frac{a_i}{t^i}=1.\] 
\end{definition}

\subsection{Rooted trees}

Let us recall some vocabulary from graph theory:
A rooted tree is a connected cycle free graph, with one vertex being
distinguished (root). (We will draw it on the top, all other vertices below).
We say the tree is $t$-ary, if all those vertices, which are not the root,
 are either a leaf, that is an end
of a path from the root, or have one predecessor and 
$t$ children.  All non-leaves are called inner vertices. Note that the root is also
an inner vertex unless for the trivial tree of order one.
In other words, for the trees we
consider, the root has degree $t$, all other vertices either 
have degree 1 (leaf) or have degree $t+1$.

\begin{definition}[Canonical rooted tree]{\label{def:rootedtrees}}
A rooted tree is called canonical if its corresponding prefix code has the 
property that the lexicographic ordering of its words corresponds 
to a nondecreasing ordering of the word lengths. 

Let us say that two rooted $t$-ary trees are equivalent, if their number of
leaves at distance $i$ from the root is the same, for all $i$.
Let $f_t(r)$ denote the number of equivalence  classes of $t$-ary 
rooted trees with exactly $r$ leaves.
\end{definition}

Note that each equivalence class contains exactly one canonical tree.
Also, if the tree has $a_i$ leaves at distance $i$ from the root, 
then $\sum_i \frac{a_i}{t^i}=1$.  This follows inductively,
since a leaf at distance $i$ from the root, 
i.e. which contributes a weight $\frac{1}{t^i}$, 
can be split into $t$ children at distance $i+1$, of
weight $\frac{1}{t^{i+1}}$ each.
As these rooted $t$-ary trees correspond to a compact code, we also call these
trees ``compact trees''. 


Using Definition \ref{def:rootedtrees} one would for example replace the code 
\[\{00, 010, 011, 012, 02, 1, 20, 21, 220, 221, 222\}\]
by the following equivalent code:
\[\{0,10,11,12, 20, 210,211,212, 220, 221, 222\}.\]

The corresponding canonical rooted tree is in Figure \ref{tree2}.
\begin{figure}[h]
\caption{Canonical tree, corresponding to
$\{0,10,11,12, 20, 210,211,212, 220, 221, 222\}.$}
{\label{tree2}}
\[ \includegraphics[width=0.25\textwidth]{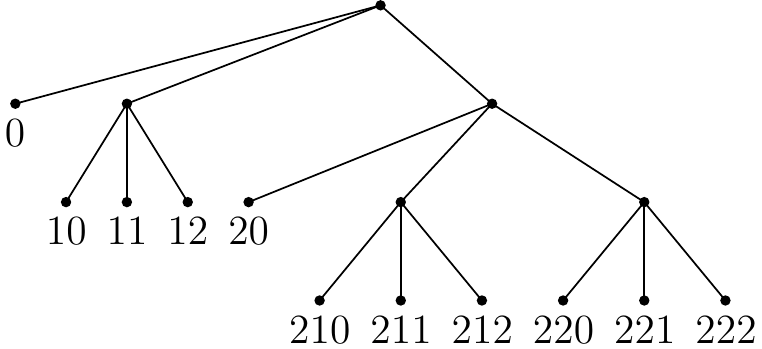}
\]
\end{figure}
In our usual way of drawing these diagrams, a canonical tree therefore
has the longer paths as far to the right hand side as possible.

\subsection{A problem on bounded degree sequences}
The number $a_i$ of code words of length $i$, 
or leaves at level $i$ is of course
bounded above by $t^i$. But there is no absolute bound on 
$\frac{a_i}{a_{i-1}}$. Let us study another sequence instead, namely 
$b_1=1, b_i=tb_{i-1}-a_{i-1}$, see 
Komlos, W. Moser and Nemetz \cite{KomlosanMoserandNemetz:1984} and 
Flajolet and Prodinger \cite{FlajoletandProdinger:1987}. 
The problems of counting these sequences
are equivalent to the earlier counting problem. 
For these sequences the ratios $\frac{b_i}{b_{i-1}}$ are bounded, 
which is why one may call these
sequences ``bounded degree sequences''.
Flajolet and Prodinger 
\cite{FlajoletandProdinger:1987} used this definition
when they counted level number sequences of trees.
\begin{definition}[Bounded degree]{\label{def:boundeddegree}}
Let $t\geq 2, r\geq 2$ be integers.
Let $f_t(r)$ denote the number of sequences
\[(b_1, \ldots , b_l),\quad l \geq 1, \quad b_1=1,\quad 1\leq b_i \leq t b_{i-1}
\quad (i=2, \ldots ,l),\quad \sum_{i=1}^l b_i=\frac{r-1}{t-1}.\]
For convenience we will later also use
$g_t(n)=f_t(1+n(t-1))$. (Here, one can think of $n=\frac{r-1}{t-1}$).

\end{definition}
A bijection between the last two definitions is as follows:
 Given a
canonical tree, we set $b_i$ to be the number of inner vertices at height 
$i-1$. Observe that the $b_i$ inner vertices guarantee that there are at most 
$t b_i$ vertices of any type (inner vertices or leaves) on the next level.

A very similar definition is due to Even and Lempel \cite{EvenandLempel:1972}. 
\begin{definition}[Proper words]{\label{def:properwords}}
Let $t\ge 2$ and $n\ge 1$ be integers. A word $u_1\ldots u_n$ over
the alphabet $\{0,1\}$ is said to be a proper word, if it can be written in the
form $u_1\ldots u_n=0^{c_0}10^{c_1}1\ldots0^{c_{l-1}}10^{c_l}$ such that $c_0=0$
and $0\le c_{i+1}\le t c_i+t-1$ holds for all $0\le i\le l-1$.
\end{definition}
Note that the sequence $c_i$ describes the lengths of the runs of consecutive
zeros. We note also that from the representation as a word of length $n$, we
immediately get $\sum_{i=1}^l c_i=n-l$.

To see that Definition \ref{def:properwords} is equivalent to
Definition~\ref{def:boundeddegree}, we simply note that the relations
$b_{i+1}=c_i+1$ and $n=\frac{r-1}{t-1}$ induce a bijection between the objects
counted in the two definitions. Even and Lempel~\cite{EvenandLempel:1972} also
give a combinatorial interpretation of this bijection (for $t=2$, but the
generalisation is straight-forward): essentially, for each $1$ in a proper word,
they replace a leaf of maximum height by an inner vertex with $t$ leaves as
successors; for each $0$, they replace a leaf of second-most height by an inner
vertex with $t$ leaves as successors.


We briefly mention some further approaches which investigate
 equivalent sequences.
Working on a different problem,
Minc \cite{Minc:1958} reduced it to the study of a binary bounded degree
sequence, Definition \ref{def:boundeddegree} above. 
Let $A$ be a free commutative entropic cyclic groupoid.
The number of elements of $A$ of a given degree turns out to satisfy 
the relation above. (For a full description we must refer to \cite{Minc:1958}).
The condition in Definition \ref{def:boundeddegree}
looks like a special partition function.
Andrews \cite{Andrews:1981} 
expanded on Minc's work, in particular studying generating functions.

A further problem, on lambda algebras $\Lambda_p$, has been related to these
sequences, see Tangora \cite{Tangora:1991}.
\subsection{An example}
As an example for these various definitions, 
let us compute $f_2(5)=3$ in the different forms.
Using Definition \ref{def:boydunitfraction}:
\[1=\frac{1}{2}+ \frac{1}{4}+\frac{1}{8}+\frac{1}{16}+\frac{1}{16}=
\frac{1}{2}+ \frac{1}{8}+\frac{1}{8}+\frac{1}{8}+\frac{1}{8}=
\frac{1}{4}+ \frac{1}{4}+\frac{1}{4}+\frac{1}{8}+\frac{1}{8}\]
is a complete list of all solutions.

Counting Huffman sequences (Definition \ref{def:Huffmansequence}) we count
 $(a_0,a_1, \ldots)$ where
$a_i$ is the number of occurrences of the fraction $\frac{1}{t^i}, i\geq 1$. 
Here with $t=2$ these sequences are:
\[(0,1,1,1,2),(0,1,0,4),(0,0,3,2).\]
Let us explicitly write down the compact Huffman codes.
\[C_1=\{0,10,110,1110,1111\},\ C_2=\{0,100,101,110,111\},\ 
C_3=\{00,01,10,110,111\}.\]

 The bounded degree sequences counted in
Definition \ref{def:boundeddegree} are $(1, 1,1,1),\ (1,1,2),\ (1,2,1)$.
The proper words in Definition \ref{def:properwords} are 
$(111), (110), (101)$.
The canonical trees (Definition \ref{def:rootedtrees}) are the following:


\begin{figure}[h]

\small\centering
\begin{minipage}[t]{3.8cm}
\includegraphics[width=0.55\textwidth]{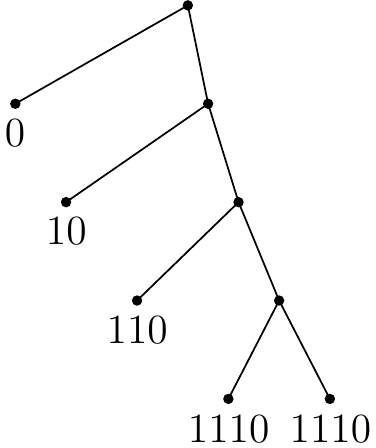}
\end{minipage}
\begin{minipage}[t]{3.8cm}
\includegraphics[width=0.55\textwidth]{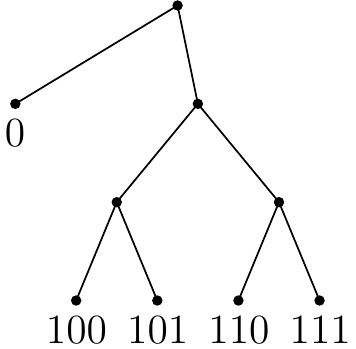}
\end{minipage}
\begin{minipage}[t]{3.8cm}
\includegraphics[width=0.55\textwidth]{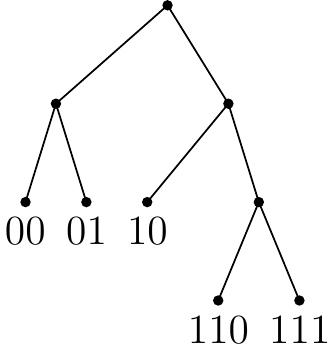}
\end{minipage}
\end{figure}

\subsection{An observation}
When evaluating $f_t(r)$, according to the Definition 
\ref{def:Huffmansequence} of Huffman sequences 
it suffices to investigate in which way a solution 
counted by $f_t(r-t+1)$ can be split.
Let $S_t(r)$ denote the set of all sequences counted by $f_t(r)$.
Generally, $(a_0, a_1, \ldots , a_i, \ldots, a_l)$ can be split into
$(a_0, a_1, \ldots, a_i -1, a_i+t, \ldots , a_l)$, whenever $a_i>0$.
Starting from a complete set of solutions, that is  $S_t(r-t+1)$, 
one only needs to branch each sequence at the last two positions, 
in order to compile a complete set of solutions, $S_t(r)$.
The reason for this is that all elements of $S_t(r)$ 
obtained from branching at any of the
earlier positions will be obtained 
from another member of $S_t(r-t+1)$ by branching at the last two positions.
Before we generally prove this let us look at an example. 
Let us determine $S_2(6)$, starting from the 
three elements of $S_2(5)=\{01112,0104,0032\}$:
\[
0111\vert 2 \rightarrow 0111\vert 12, \ 
011\vert 12 \rightarrow 011\vert 04, \ 
010\vert 4 \rightarrow 010\vert32,\ 
003\vert 2 \rightarrow 003\vert 12, \ 
00\vert 32 \rightarrow 00|24.\]

There is no need to consider
\[ 0\vert 1112 \rightarrow 0|0312, \text{ or }
01\vert 112 \rightarrow 01|032 \text{ or }
0\vert 104 \rightarrow 0|024,\]
as these are obtained otherwise.

To see this generally, let us consider
the step from $f_t(r-t+1)$ to $f_t(r)$:
If $(a_0, a_1, a_2, a_3, \ldots ,a_l)\in S_t(r-t+1)$, i.e. 
$\sum_{i=0}^l a_i=r-t+1$, with $a_l>0$, 
we need to check if  
$(a_0, a_1, \ldots, a_i-1, a_{i+1}+t, a_{i+2},\ldots , a_l)\in S_t(r)$
will be reached by branching an appropriate element
of $S_t(r-t+1)$ in any of the last two positions only.

Note that 
$(a_0, a_1, \ldots, a_i-1, a_{i+1}+t, a_{i+2}, \ldots, a_{l-1}+1, a_l-t)\in 
S_t(r-t+1)$.
Hence one reaches 
$(a_0, a_1, \ldots, a_i-1, a_{i+1}+t, a_{i+2},\ldots , a_{l-1}, a_l)\in S_t(r)$
 by branching in the last two positions only.
We may also observe that this gives a trivial upper bound of 
$f_t(r)\leq 2^{\frac{r-1}{t-1}}$.

Using the above  observation of branching at two positions only, 
Narimani and Khosravifard \cite{NarimaniandKhosravifard:2008} describe
a recursive algorithm to create all codes counted by $f_t(r)$.

The first terms of the sequence $f_2(r)$ are:
\[ t=2: 1, 1, 1, 2, 3, 5, 9, 16, 28, 50, 89, 159, 285, 510, 914, 1639,\ldots \]
The values of $f_3(r)$ are zero, whenever $r$ is even. The nontrivial part of
the sequence for odd $r$, that is $g_3(n)$ starts with 
\[ t=3: 1, 1, 1, 2, 4, 7, 13, 25, 48, 92, 176,\ldots\]
(see also \cite{PaschkeandBurkertandFehribach:2011}).
For general $t$, the sequence is only non-zero for $r=1+(t-1)n$.
For convenience one
examines $g_t(n)=f_t(1+n(t-1))$ instead, see Definition \ref{def:boundeddegree}.
For reference purposes we list the first values of the sequences
$g_t(n)$ in Table \ref{tab:g-sequences}. In these tables one can easily notice the observation above,
 $g_t(n)=f_t(r)\leq 2^{\frac{r-1}{t-1}}=2^n$. 

\begin{table}[htbp]
{\tiny{
\[\begin{array}{|r|rrrrrrrrrrrrrrrrrrrr|}\hline
t&1&2&3&4&5&6&7&8&9&10&11&12&13&14&15&16&17&18&19&20\\ \hline
2&1&1&1&2&3&5&9&16&28&50&89&159&285&510&914&1639&2938&5269&9451&16952\\
3&1&1&1&2&4&7&13&25&48&92&176&338&649&1246&2392&4594&8823&16945&32545&62509\\
4&1&1&1&2&4&8&15&29&57&112&220&432&848&1666&3273&6430&12632&24816&48754&95783\\
5&1&1&1&2&4&8&16&31&61&121&240&476&944&1872&3712&7362&14601&28958&57432&113904\\
6&1&1&1&2&4&8&16&32&63&125&249&496&988&1968&3920&7808&15552&30978&61705&122910\\
7&1&1&1&2&4&8&16&32&64&127&253&505&1008&2012&4016&8016&16000&31936&63744&127234\\
8&1&1&1&2&4&8&16&32&64&128&255&509&1017&2032&4060&8112&16208&32384&64704&129280\\
9&1&1&1&2&4&8&16&32&64&128&256&511&1021&2041&4080&8156&16304&32592&65152&130240\\
10&1&1&1&2&4&8&16&32&64&128&256&512&1023&2045&4089&8176&16348&32688&65360&130688\\ \hline
\end{array}
\]}}
\caption{Values of $g_t(n)$ for $2 \leq t \leq 10$ and $1\leq n \leq 20$.}
\label{tab:g-sequences}
\end{table}

The sequences $g_2(n), g_3(n)$ and $g_4(n)$ have been included into the 
OEIS (sequences  A002572, A176485 and A176503). (The latter two sequences 
only after the appearance of the Paschke et al. 
paper \cite{PaschkeandBurkertandFehribach:2011}.)

\subsection{The growth of $f_t(r)$}
As far as we are aware of,  Bende (1967) \cite{Bende:1967} 
and Norwood (1967) \cite{Norwood:1967}
were the first to examine the sequence 
$f_2(r)$, and they observed the connection to coding theory and trees.
(Minc's 1958 paper \cite{Minc:1958} was, of course, earlier 
but had less interest in the sequence itself.)
Bende asked about the asymptotic growth. 
Erd\H{o}s in his review of Bende's paper
(Mathematical Reviews) also wrote it is ``desirable'' to know the asymptotic.

The early 1970's saw a considerable number of contributions to the problem,
such as Boyd \cite{Boyd:1975}, Even and Lempel \cite{EvenandLempel:1972},
and Gilbert \cite{Gilbert:1971}.

A trivial upper bound for the number of rooted
canonical trees on $|V|$ vertices
is $2^{\binom{|V|}{2}}$. A much more precise bound is the number of all trees.
The number of binary trees on $|V|$ vertices is determined by the Catalan
numbers $\frac{1}{n+1} \binom{2n}{n}=O(4^n n^{-3/2})$ and the
number of non-isomorphic trees is asymptotically
$\sim C_2\, C_1^n n^{-5/2}$, where $C_1=2.955\ldots$ and 
$C_2=0.5349\ldots$, see Otter \cite{Otter:1948}.

A trivial lower bound comes from observing that Definition 
\ref{def:boundeddegree} shows that $f_2(r)\geq F_r$, where 
$F_r$ is the number of ways of partitioning $r-1$ into ones and twos.
It is known that this is the $r$-th Fibonacci number so that 
$f_2(r) \geq 0.4472\times 1.618329^r$ (for sufficiently large $r$).
Similarly, a lower bound on  $f_t(r)$, can be obtained by 
partitioning $r-1$ into 
$1$'s, $2$'s $\ldots$ and $t$'s. By means of the generating series
of $\frac{1}{1-z-z^2 - \cdots - z^t}$ and determining a real root of
the equation $1-z-z^2 - \cdots - z^t=0$
near $0.5$ the corresponding generalised Fibonacci number
$F_{t,r}$ can be shown to be about $c_t \rho_t^r$, where $\rho_t \approx 
2-\frac{1}{2^t-\frac{t}{2}}$, and $c_t$ is a positive constant. 
In the next section we will refine an analysis of this type considerably.

Boyd (1975) \cite{Boyd:1975},
Komlos, W.~Moser and Nemetz (1984) 
\cite{KomlosanMoserandNemetz:1984}, Flajolet
and Prodinger (1987) \cite{FlajoletandProdinger:1987},
all independently, gave an asymptotic:
\[ f_2(r) \sim R \rho^r,\]
where $R \approx 0.14185,\ \rho  \approx 1.7941471$.
Boyd and Flajolet and Prodinger additionally gave an error term:
$ f_2(r)=R \rho^r+ O(\tilde{\rho}^r),$
where Boyd proves $\tilde{\rho}=1.55$,
and Flajolet and Prodinger proved that this even holds for
$\tilde{\rho}=\frac{10}{7}$. 
Boyd, and Komlos, W.~Moser and Nemetz
 also study the case of more general $t$.
As noted before: 
as $f_t(r)$ is positive only for $r=1+n(t-1)$, one
examines $g_t(n)=f_t(1+n(t-1))$ instead.

In particular Komlos, Moser and Nemetz 
observed that $g_t(n) \sim K_t \rho_t^n$ with
$\rho_t \rightarrow 2$, as $t$ increases. 
Flajolet and Prodinger \cite{FlajoletandProdinger:1987} 
also refer to other areas, where the sequence $f_2(r)$ naturally occurs.

Building upon  \cite{FlajoletandProdinger:1987}, but not being aware of 
\cite{Boyd:1975} nor \cite{KomlosanMoserandNemetz:1984}, Tangora (1991)
\cite{Tangora:1991}  generalised the
results to prime values of $t$.

Another string of references follows from Gilbert's experimental 
observation that $f_2(r) \approx 0.148 (1.791)^r$, see \cite{Gilbert:1971}. 
The observation was based on
the values for $r\leq 30$, and is relatively close to the true asymptotic
$f_2 (r)\sim 0.1418\ldots (1.7941\ldots)^r$. However, these approximations have
been referred to in the more recent coding literature, see for example 
\cite{ParkerandRam:1999}, \cite{Savari:2009},
\cite{AbediniandKhatriandSavari:2010},
\cite{NarimaniandKhosravifard:2008},
 \cite{Khosravifard-Esmaeil-Saidi-Gulliver:2003} and
\cite{Khosravifard-Saidi-Esmaeil-Gulliver:2007}.

More recently Burkert (2010) \cite{Burkert:2010} and
Paschke, Burkert, Fehribach (2011) \cite{PaschkeandBurkertandFehribach:2011}
studied $f_2(r)$ and $f_t(r)$ respectively, unfortunately
with inferior results and unfortunately being unaware of the earlier 
work.\footnote{The oversights some decades 
ago can be easily explained due to the fact that 
the results were discovered independently by people with interests in number
theory, coding theory or graph theory.
Boyd's paper \cite{Boyd:1975} has a number theoretic title, 
the Komlos et al.~paper \cite{KomlosanMoserandNemetz:1984}
 a coding title and appeared in a less accessible journal.
Using standard tools such as
MathSciNet, Zentralblatt,  Google Scholar, 
Online Encyclopedia of Integer Sequences (OEIS) we found a
considerable corpus of literature referring to the result that 
$f_t(r) \sim K_t \cdot \rho_t^r$.}

In the results that we describe in detail in the next section, we 
state a rather precise asymptotic formula, with two main terms, and an error
term, which is \emph{exponentially} smaller. As an example, 
one finds an approximation
\[
f_2(n+1)\approx R \rho^{n+1} +R_2  \rho_2^{n+1},\]
with
\begin{align*}
\rho&=1.794147187541686,& \rho_2&=1.279549134726681,\\
R&=0.1418532020854094,& R_2&=0.0612410410312.
\end{align*}

Let us evaluate $f_2(50) \approx 699427308155.394\ldots$.
While the error analysis of Theorem \ref{theorem:asymptotics}
(below) gives an error of 
$|f_2(50)- (R \rho^{50} +R_2  \rho_2^{50})|
\leq 36.6\cdot 1.123^{50}\le 12092$,
the absolute error is much smaller
and, in this case, the above approximation
predicts the \emph{correct} value of $f_2(50)=699427308155$.


\subsection{A note on algorithms and complexity}
The question of the complexity of the evaluation of $f_2(r)$ is raised in 
Even and Lempel \cite{EvenandLempel:1972}. 
They give an algorithm to determine $f_2(r)$ in $O(r^3)$ additions.
This appears to be the only algorithm with analysis of its complexity.
They also state another
algorithm to give a complete list of the $f_2(r)$ elements.

Huffman, Johnson and Wilson \cite{HuffmanandJohnsonandWilson:2005} describe
another algorithm to give a complete list.

A tree based algorithm for generating the binary compact codes is described 
in \cite{Khosravifard-Esmaeil-Saidi-Gulliver:2003}.
Narimani and Khosravifard \cite{NarimaniandKhosravifard:2008} describe
a recursive algorithm to create all $t$-ary codes of length $r$
by those of length $r-t+1$.


\section{Results}

In the following, a tree will always be a $t$-ary rooted canonical tree.
The set of $t$-ary canonical trees is denoted by $\calT$.
The number of inner vertices (non-leaves) of a tree $T$ is
denoted by $n(T)$. Setting $c_{n}:=g_t(n)$ to be the number of trees $T\in\calT$ with
$n$ inner vertices, we are interested in the generating function
\begin{equation*}
  F(q)=\sum_{n\ge 0}c_n q^n=\sum_{T\in\calT} q^{n(T)}.
\end{equation*}

This generating function can be computed explicitly:
\begin{theorem}\label{theorem:generating-function}
Setting $[k]:=1+t+t^2+\cdots+t^{k-1}$, we have
  \begin{equation*}
    F(q)=\frac{\displaystyle\sum_{j=0}^{\infty}    (-1)^j q^{[j]}\prod_{i=1}^{j}\frac{q^{[i]}}{1-q^{[i]}}}%
    {\displaystyle\sum_{j=0}^\infty (-1)^{j} \prod_{i=1}^{j}\frac{q^{[i]}}{1-q^{[i]}}}.
  \end{equation*}
\end{theorem}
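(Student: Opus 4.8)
The plan is to leave the tree picture and work instead with the \emph{bounded-degree sequences} of Definition~\ref{def:boundeddegree}: identifying a tree with its sequence $(b_1,\dots,b_l)$ of numbers of inner vertices per level, we must evaluate $F(q)=1+\sum q^{b_1+\cdots+b_l}$, the sum taken over all sequences with $b_1=1$ and $1\le b_{i+1}\le tb_i$ (the extra $1$ accounting for the trivial tree). Every such sequence has a unique \emph{maximal prefix} $(1,t,t^2,\dots,t^{k-1})$, namely the longest initial block in which each term is $t$ times its predecessor; this block has some length $k\ge1$ and $b$-sum exactly $[k]$, and after it the sequence either stops or continues with a term which is at most $t^k-1$. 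Splitting $F$ according to $k$ yields an identity of the shape $F=1+\sum_{k\ge1}q^{[k]}\bigl(1+\sum_{b=1}^{t^k-1}\psi_b(q)\bigr)$, where $\psi_b$ is the generating function for the remainder of the sequence once it has reached the value $b$, and the $\psi_b$ satisfy $\psi_b=q^b\bigl(1+\sum_{w=1}^{tb}\psi_w\bigr)$. This already displays the two features of the claimed formula: the power $q^{[j]}$ comes from such maximal prefixes, and the factors $q^{[i]}/(1-q^{[i]})$ will arise as geometric series produced by iterating the prefix step.

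From here I would pass to the partial sums $S_v:=1+\sum_{w\le v}\psi_w$, which obey $S_v=S_{v-1}+q^vS_{tv}$, $S_0=1$, and $F=S_1$; one can then introduce the auxiliary generating functions $F_k:=q^{[k]}S_{t^k}$ — for instance, those counting trees whose top $k$ levels are complete — note that $F_0=F$, and derive a recursion linking $F_k$ to $F_{k+1}$. Making this recursion \emph{linear fractional} in $F_{k+1}$ — equivalently, encoding it by a $2\times2$ matrix $M_k$ with entries built from $q^{[k]}$ and $q^{[k]}/(1-q^{[k]})$ — one obtains $F=F_0$ as the image of $0$ under the composition $M_1\circ M_2\circ\cdots$, i.e. as a continued fraction. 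Since $F_k\to0$ $q$-adically and the matrix product converges, $F$ equals the ratio of the two limiting convergents; solving the three-term recursion satisfied by the numerators and the denominators of the successive convergents then identifies them as the partial sums of $\sum_{j\ge0}(-1)^jq^{[j]}\prod_{i=1}^{j}\frac{q^{[i]}}{1-q^{[i]}}$ and of $\sum_{j\ge0}(-1)^j\prod_{i=1}^{j}\frac{q^{[i]}}{1-q^{[i]}}$ respectively, the alternating signs being exactly those of the M\"obius recursion.

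The hard part is linearising that recursion, for in every natural formulation an obstruction appears: the decomposition above carries the partial sum $\sum_{b=1}^{t^k-1}\psi_b$, the recursion $S_v=S_{v-1}+q^vS_{tv}$ couples all indices $v$ simultaneously, and passing to the bivariate generating function trades this for a ``$t$-section'' (every $t$-th coefficient) in the functional equation $(1-x)\sum_vS_vx^v=\sum_vS_{tv}(qx)^v$. The key manoeuvre is to rewrite each partial sum as the full sum minus its tail and to recognise that tail as precisely the next instance of the auxiliary generating function; only after this identification does the recursion become genuinely homographic and do the two $q$-series split into a numerator and a denominator. The remaining points — convergence of the infinite product, legitimacy of interchanging the two limiting processes, and the bookkeeping in the induction on convergents — are routine, either formally in $q$ or for $\lvert q\rvert$ small. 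As a cross-check the formula can be expanded: for $t=2$ it gives $1+q+q^2+2q^3+3q^4+5q^5+9q^6+\cdots$, in agreement with the sequence $g_2(n)$.
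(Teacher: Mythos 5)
Your setup is correct as far as it goes: the suffix generating functions $\psi_b=q^bS_{tb}$, the recursion $S_v=S_{v-1}+q^vS_{tv}$ with $S_0=1$ and $F=S_1$, and the relation $F_k=q^{[k]}S_{t^k-1}+F_{k+1}$ for $F_k:=q^{[k]}S_{t^k}$ are all true. But the step you yourself flag as ``the hard part'' is exactly where the proof is missing, and the manoeuvre you propose does not supply it. In your formulation, ``full sum minus tail'' does not identify anything useful: the full sum $1+\sum_{b\ge1}\psi_b$ is \emph{not} $F$ (it counts suffix sequences with arbitrary first value, not sequences with $b_1=1$), and the tail $\sum_{b\ge t^k}\psi_b$ is not $F_{k+1}$ nor any member of your family. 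Concretely, the family $\{F_k\}$ is not closed under the recursion: for $t=2$ one gets $F_1=F-1$ and $F_2=(1-q)F-1$, but then $F_2=q^{3}S_3+F_3$ with $S_3=S_2+q^3S_6$, and $S_6$ (more generally $S_v$ with $v$ not a power of $t$) lies outside the family; for $t\ge3$ this escape already happens at $k=1$, since $S_{t-1}$ involves $S_{2t},\dots,S_{t(t-1)}$. Hence the claimed homographic recursion with entries $q^{[k]}$ and $q^{[k]}/(1-q^{[k]})$, the matrix composition, and the identification of convergents with $N_K/D_K$ are never derived; indeed nothing in your construction produces the factors $q^{[i]}/(1-q^{[i]})$ at all. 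The bivariate variant you mention, $(1-x)\sum_vS_vx^v=\sum_vS_{tv}(qx)^v$, has precisely the $t$-section obstruction you note, and writing a partial sum as full sum minus tail does not remove it, because the right-hand side is not the same series at a transformed argument. The terminal numerical check confirms the statement, not the argument.

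The missing idea is a different second variable. The paper proves the theorem by working with $G(q,u)=\sum_{T}q^{n(T)}u^{m(T)}$, where $m(T)$ is the number of leaves of maximal height (in your sequence language, $u$ marks $t\,b_l$). Appending one further level with $j$ new inner vertices contributes $(qu^t)^j$, which \emph{is} geometric in $j$; summing over $1\le j\le m(T)$ gives $\frac{qu^t}{1-qu^t}\bigl(1-(qu^t)^{m(T)}\bigr)$, and the tail $(qu^t)^{m(T)}$ recombines with $q^{n(T)}$ into the \emph{same} generating function at a transformed argument, namely $G_k(q,qu^t)$ --- this is the only setting in which your ``tail equals the next instance of the auxiliary function'' slogan is literally true. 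One obtains $G(q,u)-u=\frac{qu^t}{1-qu^t}\bigl(G(q,1)-G(q,qu^t)\bigr)$, and iterating $u\mapsto qu^t$ (so $u_j=q^{[j]}$ and the prefactor becomes $q^{[j+1]}/(1-q^{[j+1]})$) yields the two alternating series and $F=a(1)/(1-b(1))$, which is the stated formula. If you want to keep the bounded-degree-sequence language, the repair is to attach a variable to the size of the last level (equivalently to $m(T)$), i.e.\ to study $\sum_{(b_1,\dots,b_l)}q^{\sum_ib_i}u^{tb_l}$, rather than the one-parameter family $q^{[k]}S_{t^k}$.
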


Using the generating function, we can give a very precise asymptotic expression
for $c_n$. In view of the numerous asymptotic approximations we would like to
point out that this is the first result containing two main terms and an
explicit error term.

\begin{theorem}\label{theorem:asymptotics}
For $t\ge 2$, the following holds:
 \begin{equation}\label{eq:main-asymptotic}
    c_n=g_t(n)=R\rho^{n+1}+R_2\rho_2^{n+1}+R_3 r_3^n{\neweps(t,n)},
  \end{equation}
Here $\rho> \rho_2> r_3$ and $R, R_2, R_3$ are positive real constants to be
specified below, and depending on $t$.
Here and below, $\varepsilon_j(\ldots)$, $j=1,\ldots$, denote
  real functions with
$|\varepsilon_j(\ldots)|\le 1$ for all valid values of the respectively indicated parameters.

For $t \geq 16$ we have
  \begin{align}\label{eq:rho-asymptotic}
    \rho&=2-\frac1{2^{t+1}} - \frac{t + 3}{2^{2  t +3}} - \frac{3t^{2} + 19  t +
      24}{2^{3t+6}} + \frac{0.28 t^3}{2^{4t}}\neweps(t),\\
    \rho_2&=1+\frac{\log 2}{t} - \frac{\log 2-\log^2 2}{2t^2} + \frac{4\log^32 + 3\log^22 +
      6\log2}{24t^3} \label{eq:second_pole_estimate_proof}\\&\qquad+ 
    \frac{2\log^42 + 54\log^32 - 27\log^22 -6\log2}{48t^4}
    +\frac{0.26}{t^5}\neweps(t),\notag\\
    r_3&=1+\frac{\log 2}{t}-\frac{\log
      2-\log^22}{2t^2},\label{eq:r_3-definition}\\
    R&=\frac{1}{8}+\frac{t - 2}{2^{t+5}}+ \frac{2  t^{2} +
3  t - 5}{2^{2t+7}}  + \frac{9  t^{3} + 45  t^{2} + 20  t -
68}{2^{3t+10}} + \frac{t^{4}}{50\cdot
2^{4t}}\neweps(t),\label{eq:dominant-residue}\\
    R_2&=\frac1{4t}-\frac{4\log 2 + 1}{8t^2}  +
    \frac{0.77}{t^3}\neweps(t)\label{eq:residue-second-pole},\\
R_3&=5 t^4.\label{eq:R_3definition}
  \end{align}
 
  For $3\le t\le 15$, \eqref{eq:main-asymptotic} holds with  
\eqref{eq:rho-asymptotic},
\eqref{eq:dominant-residue},
  \eqref{eq:residue-second-pole} and the values for $\rho_2, r_3$ and $R_3$
  given in Table~\ref{tab:special-values}.


  For $t=2$, \eqref{eq:main-asymptotic} holds with
  \eqref{eq:residue-second-pole} and the values for 
  $\rho$, $\rho_2$, $r_3, R$ and $R_3$ given in Table~\ref{tab:special-values}.
\end{theorem}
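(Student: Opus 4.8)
The plan is to extract the asymptotics of $c_n$ from the explicit generating function $F(q)$ of Theorem~\ref{theorem:generating-function} by a singularity analysis. First I would observe that $F(q)$ is meromorphic in a neighbourhood of the unit disc: the numerator and denominator are each given by series $\sum_j (-1)^j \prod_{i=1}^j \frac{q^{[i]}}{1-q^{[i]}}$ whose terms, for $|q|$ bounded away from $1$, decay faster than geometrically because $[i]=1+t+\cdots+t^{i-1}$ grows like $t^{i-1}$ and hence $q^{[i]}\to 0$ very quickly; so both series define analytic functions, and $F$ is their ratio. The singularities of $F$ inside a disc of radius slightly larger than $1$ are therefore the zeros of the denominator $D(q):=\sum_{j\ge 0}(-1)^j\prod_{i=1}^j\frac{q^{[i]}}{1-q^{[i]}}$ (after cancelling any common zeros with the numerator, and noting the poles of the individual terms at $q^{[i]}=1$, i.e.\ at roots of unity, are removable in the full sum or lie on $|q|=1$ and need separate care). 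The standard transfer theorem then gives $c_n = \sum_{k} \operatorname{Res}_{q=q_k}\bigl(-F(q)q^{-n-1}\bigr) + (\text{error from the next contour})$, where $q_1=1/\rho$, $q_2=1/\rho_2$ are the two smallest zeros of $D$ and the error is controlled by pushing the contour out to radius $1/r_3$.

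The core of the argument is then a careful \emph{numerical/perturbative location of the zeros of $D(q)$} and estimation of the residues $R=\operatorname{Res}_{q=1/\rho}(-F(q)q^{-n-1})\rho^{-(n+1)}$ (a constant), $R_2$ similarly, and a bound $R_3 r_3^n$ on the tail. For the dominant zero I would write $q=1/\rho$ with $\rho=2-\delta$ and expand $D(2-\delta)^{-1}\ldots$ — more precisely, truncate $D$ after the first few terms ($j=0,1,2,3$ suffice since later terms are of size $O(2^{-2^{t-1}})$, utterly negligible), solve the resulting polynomial-type equation in $\delta$ by successive approximation, and track the $2^{-kt}$-scale corrections to get \eqref{eq:rho-asymptotic}. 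The same machine applied to the \emph{second} zero is subtler: $\rho_2\to 1$ as $t\to\infty$, so $q_2\to 1$ and one must understand $D(q)$ near $q=1$, where the factors $1/(1-q^{[i]})$ blow up; here I would substitute $q=e^{-s/t}$ or $\rho_2=1+u/t$, use $1-q^{[i]}\approx [i]s/t\approx t^{i-2}s$, and resum to find the leading relation $u=\log 2$, then iterate for the $1/t^k$ corrections in \eqref{eq:second_pole_estimate_proof}. The residues are obtained by differentiating $D$ at the located zeros and expanding the numerator there; the error function $\varepsilon_1(t,n)$ absorbs (i) the truncation of $D$, (ii) the approximation of the zeros, (iii) the contribution of all further zeros and of the arc $|q|=1/r_3$, all of which I would bound crudely but explicitly, which is exactly why $R_3=5t^4$ and $r_3$ is only the first two terms of $\rho_2$'s expansion — one deliberately sacrifices precision in the third term to get a clean, provable bound.

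For the small values $t=2$ and $3\le t\le 15$ the asymptotic expansions in $t$ are not yet in their asymptotic regime, so instead I would locate $\rho,\rho_2,r_3$ and compute $R,R_2,R_3$ by \emph{rigorous interval arithmetic} directly from the formula for $F(q)$ (again truncating the rapidly convergent series with a certified tail bound), and record the outcomes in Table~\ref{tab:special-values}; the structure \eqref{eq:main-asymptotic} of the proof is identical, only the inputs change. The main obstacle I anticipate is the \emph{rigorous} control of the error term near $q=1$: verifying that apart from $q_1,q_2$ (and $q_3$, which we only bound) there are no other zeros of $D$ in $|q|\le 1/r_3$, and that the roots of unity on $|q|=1$ do not contribute, requires either an argument-principle / Rouché estimate on $D$ along the relevant contour or an explicit lower bound $|D(q)|\ge c$ there — and getting this uniformly in $t$, with constants good enough to yield $|\varepsilon_1|\le 1$, is the delicate technical heart of the theorem. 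Everything else is (lengthy but) routine expansion once the contour-shifting framework is set up.
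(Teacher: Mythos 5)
Your proposal is correct and follows essentially the same route as the paper: write $F=N/D$, shift the contour to $|q|=1/r_3$, pick up the residues at the two smallest zeros of $D$ (located by a $\rho=2-\delta$ expansion and a $\rho_2=1+\log 2/t+\cdots$ expansion with iterated corrections), and handle $2\le t\le 15$ by certified interval arithmetic. The ``delicate heart'' you flag -- excluding further zeros and bounding $F$ uniformly in $t$ on $|q|=1/r_3$ -- is exactly what the paper's Proposition~\ref{proposition:roots} supplies, via truncation error bounds, bootstrapped fixed-point estimates, derivative bounds to force simplicity and uniqueness, and the explicit bounds $|D(q)|\ge 1/t^3$, $|N(q)|\le 5t$ giving $|F(q)|\le 5t^4$ on that circle.
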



{\tiny{
\begin{table}[htbp]
  \centering
  \begin{equation*}
  \begin{array}{|c|l|l|l|l|l|l|}
\hline
t&\rho&\rho_2&r_3&R&R_2&R_3\\\hline
2&1.794147187541686&1.279549134726681&1.123&0.1418532020854094&0.0612410410312*&36.6\\
3&1.920712538405631*&1.211479378117327&1.098&0.1338681353605138*&0.05040725710011751*&39.0\\
4&1.964624757813775*&1.165158374565692&1.083&0.1305243270109503*&0.04239969309700251*&58.4\\
5&1.983293986764127*&1.134459698442781&1.074&0.1284678647212778*&0.03633182386516354*&70.7\\
6&1.991897175722647*&1.113019849812048&1.068&0.1271299952558400*&0.03168855397536632*&50.0\\
7&1.996015107731262*&1.097324075593615&1.063&0.1262776860399922*&0.02807600275247040*&59.6\\
8&1.998025544625657*&1.085389242111509&1.059&0.1257503987658994*&0.02520568904841775*&48.1\\
9&1.999017663916874*&1.076032488551186&1.056&0.1254328058843682*&0.02287594728315024*&24.0\\
10&1.999510161506312*&1.068511410911158&1.053&0.1252458295005635*&0.02094759256441895*&19.7\\
11&1.999755441055006*&1.062339511503337*&1.050&0.1251378340222618*&0.01932397366876184*&20.1\\
12&1.999877817773010*&1.057186165846774*&1.047&0.1250764428075050*&0.01793689446751572*&26.6\\
13&1.999938935019296*&1.052819586914068*&1.044&0.1250420050254539*&0.01673722535920120*&80.6\\
14&1.999969474502513*&1.049072853620226*&1.042&0.1250229006766309*&0.01568876914448585*&43.3\\
15&1.999984739115025*&1.045822904924682*&1.040&0.1250124013324635*&0.01476426249364319*&39.0\\ \hline
\end{array}
\end{equation*}

  \caption{Values for small values of $t$. Starred ($*$)  entries correspond
  to values satisfying the asymptotic estimates of
  Theorem~\ref{theorem:asymptotics}. The values could be given with much 
higher precision, there is some uncertainty about the last digit.}
  \label{tab:special-values}
\end{table}
}}

For simplicity the functions $\varepsilon_j$ can be thought of as $O(1)$
terms. Some of our proofs indeed depend on 
explicit values of the error bounds. For this reason we had to compute
absolute $O$-constants in any case, and decided to include these in the statement
of the theorem.

The asymptotic result focusses on the first and the second exponential terms
$\rho^{n+1}$ and $\rho_2^{n+1}$ and no effort has been made to improve the error
term $r_3^n$: note that for large $t$ it is not much smaller then the second order term
$\rho_2^{n+1}$. For Table \ref{tab:special-values}
the values $r_3$ have been improved by a computer calculation in comparison with
Equation (\ref{eq:r_3-definition}), also leading to a stronger value of the
constant $R_3$ in comparison with
(\ref{eq:R_3definition}). In principle, this type of improvement is possible
for any fixed $t \geq 16$ as well.

The asymptotic expansions of $\rho$, $\rho_2$, $R$ and $R_2$ can always be
refined by further iterating the fixed point equations in the proof of
Proposition~\ref{proposition:roots}. So for fixed $k$, we could refine the
estimates for $\rho$ and $R$ to a precision of $t^k2^{-tk}$ and the estimates
for $\rho_2$ and $R_2$ to a precision of $t^{-k}$.

\section{Generating Function}

This section is devoted to the proof of Theorem~\ref{theorem:generating-function}.

\begin{proof}[Proof of Theorem~\ref{theorem:generating-function}]
  In the proof of the theorem, we will actually consider more refined
  statistics in order to derive a functional equation for a more general
  generating function.

  The height of a vertex in a rooted tree is defined to be its distance from the
  root. So the root has height $0$. The height $\height(T)$ of a tree $T$ is defined to be the
  maximal height of its vertices.

  For a rooted tree $T$, we set $m(T)$ to be the number of leaves of maximum
  height of $T$.

  We will derive a functional equation for the generating function
  \begin{equation*}
    G(q,u)=\sum_{T\in\calT} q^{n(T)}u^{m(T)},
  \end{equation*}
  i.e., $u$ counts the number of leaves of maximal height and $q$ counts the
  number of
  inner vertices. By definition, we have $F(q)=G(q,1)$.
  
  To derive the functional equation for $G(q,u)$, we partition $\calT$
  with respect to the height and consider
  \begin{equation*}
    G_k(q,u)=\sum_{\substack{T\in\calT\\\height(T)=k}}q^{n(T)}u^{m(T)}.
  \end{equation*}
  Obviously, we have
  \begin{equation*}
    G(q,u)=\sum_{k\ge 0}G_k(q,u).
  \end{equation*}
  
  A tree $T$ of height $k$ corresponds to exactly $m(T)$ trees $T'_j$,
  $j\in\{1,\ldots,m(T)\}$, of height $k+1$: $T_j'$ arises from $T$ by replacing
  $j$ of the $m(T)$ leaves of maximum height by vertices with $t$ attached leaves. On the other
  hand, all trees $T'$ of height $k+1$ are uniquely described by this process.

  Thus we have
  \begin{equation}\label{eq:G_k_recursion}
    \begin{aligned}
      G_{k+1}(q,u)&=\sum_{\substack{T\in\calT\\\height(T)=k}}\sum_{j=1}^{m(T)} q^{n(T)+j}u^{jt}\\
      &=\sum_{\substack{T\in\calT\\\height(T)=k}}q^{n(T)}\cdot qu^t\cdot\frac{1-(qu^t)^{m(T)}}{1-qu^t}\\
      &=\frac{qu^t}{1-qu^t}\left(G_k(q,1)-G_k(q,qu^t)\right).
    \end{aligned}
  \end{equation}
  We have $G_0(q,u)=u$, so summing over all $k\ge 0$ yields
  \begin{equation}\label{eq:functional-equation-G}
    G(q,u)-u=\frac{qu^t}{1-qu^t}(G(q,1)-G(q,qu^t)).
  \end{equation}
  The generating function $G(q,u)$ is certainly convergent for $|u|\le1$ and $|q|<
  1/2$, as can be seen from \eqref{eq:G_k_recursion}.

  We now keep $q$ with $|q|< 1/2$ fixed and consider everything as a function
  of $u$ with $|u|\le 1$. We use the abbreviations $h(u)=qu^t/(1-qu^t)$ and $g(u)=G(q,u)$. We
  rewrite the functional equation \eqref{eq:functional-equation-G} as
  \begin{equation*}
    g(u)=u+h(u)g(1)-h(u)g(qu^t).
  \end{equation*}
  By iteration, we obtain
  \begin{align*}
    g(u)&=a_k(u)+b_k(u)g(1)+c_k(u)g(q^{[k+1]}u^{t^{k+1}}),\\
    a_k(u)&=\sum_{j=0}^k (-1)^j q^{[j]}u^{t^j}\prod_{i=0}^{j-1}h(q^{[i]}u^{t^i}),\\
    b_k(u)&=\sum_{j=0}^k (-1)^j \prod_{i=0}^{j}h(q^{[i]}u^{t^i}),\\
    c_k(u)&=(-1)^{k+1}\prod_{i=0}^{k}h(q^{[i]}u^{t^i})
  \end{align*}
  for $k\ge 0$. As $|h(u)|\le \frac{|q|}{1-|q|}<1$ holds for all $|u|\le 1$, the limits
  \begin{align*}
    a(u)&=\sum_{j=0}^{\infty}    (-1)^j q^{[j]}u^{t^j}\prod_{i=0}^{j-1}h(q^{[i]}u^{t^i}),\\
    b(u)&=\sum_{j=0}^\infty (-1)^j \prod_{i=0}^{j}h(q^{[i]}u^{t^i})
  \end{align*}
  exist and we have $\lim_{k\to \infty} c_k(u)g(q^{k+1}u^{t^{k+1}})=0$.

  Thus we obtained
  \begin{equation*}
    g(u)=a(u)+b(u)g(1).
  \end{equation*}
  Setting $u=1$ yields
  \begin{equation*}
    F(q)=G(q,1)=g(1)=\frac{a(1)}{1-b(1)}.
  \end{equation*}
\end{proof}

\section{Asymptotics}

We will use the following notations in order to work with the generating
function $F$:
\begin{align*}
    f_j(q)&=\frac{q^{[j]}}{1-q^{[j]}},\\
    N_K(q)&=\sum_{0\le k<K}(-1)^kq^{[k]} \prod_{j=1}^k f_j(q),&
    D_K(q)&=\sum_{0\le k<K}(-1)^k \prod_{j=1}^k f_j(q),\\
    N(q)&=\sum_{0\le k}(-1)^kq^{[k]} \prod_{j=1}^k f_j(q),&
    D(q)&=\sum_{0\le k}(-1)^k\prod_{j=1}^k f_j(q).
  \end{align*}
The quantities have been defined such that $F(q)=N(q)/D(q)$.

We intend to work with the finite sums $D_K$ and $N_K$ for fixed values of $K$,
so we need upper bounds for the approximation errors.

\begin{lemma}\label{lemma:approximation-errors}
  Let $K\ge 0$ and $|q|^{[K+1]}<1/2$. Then
  \begin{subequations}
    \begin{align}
      |N(q)-N_K(q)|&\le \left(\frac{1-|q|^{[K+1]}}{1-2|q|^{[K+1]}}\prod_{j=1}^K
        \frac1{1-|q|^{[j]}}\right) |q|^{[K]+\sum_{j=1}^K [j]},\label{eq:numerator-approximation}\\
      |D(q)-D_K(q)|&\le \left(\frac{1-|q|^{[K+1]}}{1-2|q|^{[K+1]}}\prod_{j=1}^K
        \frac1{1-|q|^{[j]}}\right) |q|^{\sum_{j=1}^K
        [j]}.\label{eq:denominator-approximation}
    \end{align}
  \end{subequations}
  These bounds are decreasing in $t$ and increasing in $|q|$.
\end{lemma}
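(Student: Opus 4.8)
The plan is to estimate the tails $N(q)-N_K(q)=\sum_{k\ge K}(-1)^kq^{[k]}\prod_{j=1}^k f_j(q)$ and $D(q)-D_K(q)=\sum_{k\ge K}(-1)^k\prod_{j=1}^k f_j(q)$ by the triangle inequality and a geometric-series bound. First I would record the elementary observations that $[k+1]=1+t[k]$, so $[k]$ is strictly increasing, and that $|q|^{[K+1]}<1/2$ forces $|q|^{[k]}<1/2$ for all $k\ge K+1$ (since $[k]\ge[K+1]$), hence $|f_k(q)|=|q|^{[k]}/|1-q^{[k]}|\le |q|^{[k]}/(1-|q|^{[k]})<1$ for those $k$. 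This gives the ratio control needed to sum the tail.

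The core computation for the denominator: the term with index $k\ge K$ has absolute value $\prod_{j=1}^k \frac{|q|^{[j]}}{|1-q^{[j]}|}\le \Bigl(\prod_{j=1}^K\frac1{1-|q|^{[j]}}\Bigr)|q|^{\sum_{j=1}^K[j]}\cdot\prod_{j=K+1}^k\frac{|q|^{[j]}}{1-|q|^{[j]}}$, where I have peeled off the first $K$ factors (bounding each $|1-q^{[j]}|\ge 1-|q|^{[j]}$ and keeping the $|q|^{[j]}$ powers explicitly, which produces the stated $|q|^{\sum_{j=1}^K[j]}$) and left the remaining factors to be summed. For the remaining factors I would use $\frac{|q|^{[j]}}{1-|q|^{[j]}}\le \frac{|q|^{[K+1]}}{1-|q|^{[K+1]}}=:\theta<1$ for each $j\ge K+1$, so that $\sum_{k\ge K}\prod_{j=K+1}^k\frac{|q|^{[j]}}{1-|q|^{[j]}}\le \sum_{m\ge 0}\theta^m=\frac1{1-\theta}=\frac{1-|q|^{[K+1]}}{1-2|q|^{[K+1]}}$. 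Multiplying the pieces yields exactly \eqref{eq:denominator-approximation}. For the numerator tail the only change is the extra factor $q^{[k]}$ in term $k$; since $[k]\ge[K]$ this contributes an extra $|q|^{[K]}$ out front (absorbed into the $|q|^{[K]+\sum_{j=1}^K[j]}$) while the tail of the remaining powers $|q|^{[k]-[K]}$ only helps and can be dropped, giving \eqref{eq:numerator-approximation}.

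Finally, the monotonicity claims: the prefactor $\frac{1-|q|^{[K+1]}}{1-2|q|^{[K+1]}}\prod_{j=1}^K\frac1{1-|q|^{[j]}}$ and the powers $|q|^{\sum_{j=1}^K[j]}$, $|q|^{[K]+\sum_{j=1}^K[j]}$ are each manifestly increasing in $|q|$ on the relevant range (all the $|q|^{[j]}$ increase, $x\mapsto \frac{1-x}{1-2x}$ and $x\mapsto\frac1{1-x}$ are increasing on $[0,1/2)$, and the powers increase since $|q|<1$ would make them decrease — so one must check $|q|\le 1$; indeed $|q|^{[K+1]}<1/2$ with $[K+1]\ge1$ gives $|q|<1$, hence $x\mapsto x^c$ is increasing in... wait, for $0<|q|<1$ and $c>0$ the map $|q|\mapsto|q|^c$ is \emph{increasing} in $|q|$), and increasing in $t$ because each exponent $[j]=1+t+\dots+t^{j-1}$ grows with $t$ while $0<|q|<1$, so $|q|^{[j]}$ decreases with $t$, making every factor $\frac1{1-|q|^{[j]}}$ and $\frac{1-|q|^{[K+1]}}{1-2|q|^{[K+1]}}$ decrease — hence the whole bound decreases in $t$, as stated. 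The only mild obstacle is bookkeeping: making sure the index shift in $\prod_{j=K+1}^k$ lines up with $\sum_{j=1}^K[j]$ in the exponent and that the peeled-off geometric series starts at the right term; once the telescoping of factors is set up correctly the estimate is routine.
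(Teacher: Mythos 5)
Your argument is correct and is essentially the paper's own proof: bound each tail term via $|f_j(q)|\le f_j(|q|)$, peel off the first $K$ factors to produce $|q|^{\sum_{j=1}^K[j]}\prod_{j=1}^K\frac1{1-|q|^{[j]}}$, and sum the remaining factors as a geometric series with ratio $f_{K+1}(|q|)=\frac{|q|^{[K+1]}}{1-|q|^{[K+1]}}<1$, the numerator case differing only by the extra $|q|^{[K]}$. Your monotonicity discussion reaches the right conclusion (the momentary phrase ``increasing in $t$'' is just a slip, immediately corrected to decreasing), so there is no gap.
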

\begin{proof}
  As $|f_j(q)|\le f_j(|q|)$ and  $f_j(|q|)$ is  decreasing in $j$, we have
  \begin{align*}
    |D(q)-D_K(q)|&\le \sum_{k=K}^\infty \prod_{j=1}^K f_j(|q|)\prod_{j={K+1}}^k f_j(|q|)\\
        &\le \prod_{j=1}^K f_j(|q|) \sum_{k=K}^\infty  f_{K+1}(|q|)^{k-K}\\
        &= \frac1{1-f_{K+1}(|q|)}\prod_{j=1}^K f_j(|q|),
  \end{align*}
  which, upon inserting the definition of $f_j$, yields
  \eqref{eq:denominator-approximation}. The approximation bound \eqref{eq:denominator-approximation} for the
  numerator follows along the same lines, we get an additional factor $q^{[K]}$.
\end{proof}

We will also need estimates for the derivative $D'(q)$:
\begin{lemma}\label{lemma:error-derivative}
  Let $t\ge 30$ and $q\in\C$ with $1/2\le |q|\le 1/r_3$, where $r_3$ is defined
  in \eqref{eq:r_3-definition}.
  
  Then
  \begin{equation*}
    |D'(q)-D'_4(q)|\le \frac1{2^{t^2}}.
  \end{equation*}
\end{lemma}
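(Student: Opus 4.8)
The plan is to bound the tail $D'(q) - D'_4(q) = \sum_{k\ge 4}(-1)^k \frac{d}{dq}\bigl(\prod_{j=1}^k f_j(q)\bigr)$ term by term, using the product rule together with the uniform estimates on $f_j$ and $f_j'$ that hold in the annulus $1/2\le|q|\le 1/r_3$. First I would record that for $k\ge 4$ we have $[k]\ge [4] = 1+t+t^2+t^3$, so that $|q|^{[k]}$ is extremely small: since $|q|\le 1/r_3$ and $r_3 = 1+\frac{\log 2}{t}-O(1/t^2)$, one has $|q|^{[k]}\le |q|^{[4]}\le r_3^{-t^3}$, which for $t\ge 30$ is far smaller than $2^{-t^2}$ (indeed $r_3^{-t^3}\approx 2^{-t^2}$ up to lower-order corrections, so I will want a slightly more careful comparison here — see below). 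Consequently $|f_j(q)| = \bigl|\frac{q^{[j]}}{1-q^{[j]}}\bigr|\le \frac{|q|^{[j]}}{1-|q|^{[j]}}\le 2|q|^{[j]}$ for every $j\ge1$ in this range, and likewise $|f_j'(q)| = \bigl|\frac{[j]q^{[j]-1}}{(1-q^{[j]})^2}\bigr|\le 4[j]\,|q|^{[j]-1}\le 8[j]\,|q|^{[j]}$ (using $|q|\ge 1/2$ to absorb the $|q|^{-1}$).

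Next I would apply the product rule: for each $k\ge 4$,
\begin{equation*}
\Bigl|\frac{d}{dq}\prod_{j=1}^k f_j(q)\Bigr|
\le \sum_{\ell=1}^k |f_\ell'(q)|\prod_{j\ne \ell}|f_j(q)|
\le \sum_{\ell=1}^k 8[\ell]\,|q|^{[\ell]}\prod_{j\ne\ell}2|q|^{[j]}
= 2^{k-1}\,|q|^{\sum_{j=1}^k[j]}\sum_{\ell=1}^k 8[\ell].
\end{equation*}
Since $\sum_{\ell=1}^k[\ell]\le k[k]$ and $\sum_{j=1}^k[j]\ge [k]\ge[4]$, and since $2^{k-1}|q|^{\sum_{j=1}^{k}[j]-[k]}\le 2^{k-1}|q|^{\sum_{j=1}^{k-1}[j]}$ decays geometrically in $k$ (each extra factor contributes $2|q|^{[j]}\le 2r_3^{-t}<1/2$ for $t\ge30$), the whole sum over $k\ge 4$ is dominated by its first term times a convergent geometric factor bounded by, say, $2$. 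This yields a bound of the shape
\begin{equation*}
|D'(q)-D'_4(q)|\le C\,t\,[4]\,|q|^{[4]}\le C\,t^4\,r_3^{-t^3}
\end{equation*}
for an explicit absolute constant $C$ (coming from the $8$, the geometric factor $2$, and the crude bound $\sum_{\ell=1}^4[\ell]\le 4[4]\le 4t^4$ in the leading term — though I should keep $k$ general since the $k=4$ term need not dominate $\sum_\ell[\ell]$ by much, the point is only that everything is $\le$ polynomial in $t$ times $|q|^{[4]}$).

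The main obstacle is the final numerical comparison $C\,t^4\,r_3^{-t^3}\le 2^{-t^2}$ for $t\ge 30$: since $r_3^{-t^3} = \exp(-t^3\log r_3)$ and $\log r_3 = \frac{\log 2}{t} + O(1/t^2)$, we get $r_3^{-t^3} = \exp(-t^2\log 2 + O(t)) = 2^{-t^2}\cdot e^{O(t)}$, so the bare exponential is only borderline — the polynomial prefactor $C t^4$ and the $e^{O(t)}$ error both need to be beaten. This is where I would use that $r_3$ is chosen with $\log r_3 = \frac{\log 2}{t} - \frac{\log 2 - \log^2 2}{2t^2}$, i.e. the second-order term makes $t^3\log r_3 = t^2\log 2 - \frac{(\log 2-\log^2 2)t}{2} + O(1)$, so that $r_3^{-t^3} = 2^{-t^2}\cdot e^{-(\log 2-\log^2 2)t/2 + O(1)}$, and the factor $e^{-(\log2-\log^22)t/2}$ (note $\log2-\log^22\approx 0.21>0$) decays faster than any fixed power $C t^4$ grows once $t\ge 30$. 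Verifying the threshold $t=30$ is a finite explicit check; for $t$ larger it only improves. Thus the argument is: term-by-term product-rule bound, geometric summation of the tail, then a single-variable inequality $C t^4 e^{-0.10\,t}\le 1$ valid for $t\ge 30$.
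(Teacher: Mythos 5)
Your overall strategy (bound the tail $\sum_{k\ge 4}$ of $D'-D_4'$ term by term via the product rule, sum a geometric tail, then compare with $2^{-t^2}$) is the same as the paper's, but two steps fail as written. The smaller problem: the uniform bounds $|f_j(q)|\le 2|q|^{[j]}$ and $|f_j'(q)|\le 8[j]\,|q|^{[j]}$ are false for $j=1$ (and borderline for $j=2$), because on the given range $|q|$ may be as large as $1/r_3=1-\Theta(1/t)$, so $f_1(|q|)=|q|/(1-|q|)$ is of order $t$ and $1/(1-|q|)^2$ is of order $t^2$ (the paper uses $r_3-1\ge 1/(2t)$, i.e.\ $f_1\le 2t$, and $r_3^{[2]}-1\ge 1$). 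This only costs extra polynomial factors in $t$ and is repairable.

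The fatal step is the last one. After your (correct in spirit) display with the factor $|q|^{\sum_{j=1}^k[j]}$, you discard the exponents $[1]+[2]+[3]=t^2+2t+3$ and also replace $[4]=t^3+t^2+t+1$ by $t^3$, reducing everything to the claim $C\,t^4\,r_3^{-t^3}\le 2^{-t^2}$. That claim is false: with the definition \eqref{eq:r_3-definition}, $r_3=1+\frac{\log 2}{t}-\frac{\log 2-\log^2 2}{2t^2}$, one has $\log r_3=\frac{\log 2}{t}-\frac{\log 2}{2t^2}+O(t^{-3})$ — you substituted the bracket for $\log r_3$ itself, forgetting the $-x^2/2$ term of $\log(1+x)$, which exactly cancels the $\log^2 2$ contribution — hence $t^3\log r_3=t^2\log 2-\frac{t\log 2}{2}+O(1)$ and $r_3^{-t^3}=2^{-t^2}\cdot 2^{t/2}e^{O(1)}$, i.e.\ \emph{larger} than $2^{-t^2}$ by an exponentially growing factor (at $t=30$ your proposed bound already overshoots by roughly a factor $10^{10}$), so no single-variable inequality $Ct^4e^{-ct}\le 1$ can close it. The cure is simply not to throw the exponent away: the dominant $k=4$ term carries $|q|^{[1]+[2]+[3]+[4]}$ with $[1]+[2]+[3]+[4]=t^3+2t^2+3t+4$, and $(t^3+2t^2+3t+4)\log r_3=t^2\log 2+\frac{3t\log 2}{2}+O(1)$, giving a bound of the shape $\mathrm{poly}(t)\,2^{-t^2-3t/2}$, which does beat $2^{-t^2}$ for $t\ge 30$ even after the order-$t$ corrections from $f_1$ and $f_1'$. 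This is exactly how the paper argues: writing $q=1/z$, it keeps the individual factors via $r_3^{[2]}-1\ge 1$, $r_3^{[3]}-1\ge 2^t$, $r_3^{[4]}-1\ge 2^{t^2+t/2}$, and bounds the derivative through $\prod_j f_j\cdot\frac1q\sum_j\frac{[j]}{1-q^{[j]}}$ before summing the geometric tail over $k\ge 4$.
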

\begin{proof}
  Let $q=1/z$ with $r_3\le |z|\le 2$. Then $f_j(q)=f_j(1/z)=\frac1{z^{[j]}-1}$
  and $|f_j(q)|=1/|z^{[j]}-1|\le 1/(r_3^{[j]}-1)$. By estimating the relevant
  power series, we get
  \begin{subequations}
    \begin{align}
      r_3-1&\ge \frac{1}{2t},\notag\\
      r_3^{[2]}-1&= \exp\left((1+t)\log\left(1+\frac{\log 2}{t}-\frac{\log
            2-\log^22}{2t^2}\right)\right)-1
      \ge 1,\notag\\
      r_3^{[3]}-1&\ge 2^t,\label{eq:r_3_3_estimate}\\
      r_3^{[4]}-1&= 2^{t^2+t/2}.\label{eq:r_3_4_estimate}
    \end{align}
  \end{subequations}
  We have 
  \begin{align*}
    |D'(1/z)-D_4'(1/z)|&\le |z|\sum_{k=4}^\infty \prod_{j=1}^k
    f_j(1/|z|)\left(\sum_{j=1}^k \frac{[j]}{1-(1/|z|)^{[j]}}\right)\\
    &\le 2 \sum_{k=4}^\infty \frac t{2^{-1+t(k-1)/2+(k-3)t^2}}
    \left(4t+4\sum_{j=2}^k[j] \right)
    \le \sum_{k=4}^\infty\frac{k t^{k+1}}{2^{(k-3)t^2+t(k-1)/2-4}}\\&\le
    \frac12\sum_{k=4}^\infty\frac1{2^{t^2(k-3)}}\le \frac1{2^{t^2}}.
  \end{align*}

\end{proof}

The exponential growth of the coefficients $c_n$ of $F(q)$ is directly related
to the dominating pole $1/\rho$ of $F(q)$. So we now investigate the location
of the poles of $F(q)$.

\begin{proposition}\label{proposition:roots}
  Let $t\ge 2$. Then there are exactly two poles $1/\rho$ and $1/\rho_2$ of
  $F(q)$ with $|q|\le 1/r_3$, where $r_3$ has been
  defined in \eqref{eq:r_3-definition} (or Table~\ref{tab:special-values} for $t\in\{2,3\}$).

  Both $1/\rho$ and $1/\rho_2$ are simple poles of $F(q)$. The dominant pole
  $1/\rho$ of $F(q)$ is asymptotically given by \eqref{eq:rho-asymptotic} (or
  Table~\ref{tab:special-values} for $t=2$).
  
  The residue of $F(q)$ at $1/\rho$ is $-R$ where $R$ is asymptotically given 
  by \eqref{eq:dominant-residue} (or Table~\ref{tab:special-values} for $t=2$).

  The pole $1/\rho_2$ is given by \eqref{eq:second_pole_estimate_proof} (or
  Table~\ref{tab:special-values} for $2\le t\le 15$),
  the residue of $F(q)$ at $1/\rho_2$ is $-R_2$, where  $R_2$ is given in 
  \eqref{eq:residue-second-pole}.

  Finally, we have
  \begin{equation}\label{eq:F-bound-r_3}
    |F(q)|\le 5t^4
  \end{equation}
  for all $q$ with $|q|=1/r_3$.
\end{proposition}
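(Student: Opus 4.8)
The plan is to prove everything by studying $F(q)=N(q)/D(q)$ through its finite truncations $N_K,D_K$, using Lemma~\ref{lemma:approximation-errors} to control the tails, and then applying Rouché's theorem on the circle $|q|=1/r_3$. First I would fix $K$ (in practice $K=4$ suffices for large $t$, as the later lemmas suggest, and smaller $K$ for small $t$) and show that on and inside the disk $|q|\le 1/r_3$ the poles of $F$ are precisely the zeros of $D(q)$ that are not cancelled by zeros of $N(q)$; since $r_3>1$ the relevant region is $|q|<1$, where the factors $1-q^{[j]}$ are nonzero, so the only possible poles come from zeros of the denominator series $D(q)$. The core analytic step is then: on $|q|=1/r_3$, the truncation error $|D(q)-D_K(q)|$ from \eqref{eq:denominator-approximation} is smaller than $|D_K(q)|$, so by Rouché $D$ and $D_K$ have the same number of zeros inside; and $D_K$ is an explicit rational function (a polynomial divided by $\prod(1-q^{[j]})$) whose zeros can be located by hand, giving exactly two zeros $1/\rho$ and $1/\rho_2$ inside $|q|<1/r_3$, both simple. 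One also checks $N$ does not vanish there, so these are genuine simple poles of $F$.

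Next I would pin down the asymptotic locations. For the dominant pole, I would set up the fixed-point equation $D(1/\rho)=0$, write it as $\rho = \Phi(\rho)$ where $\Phi$ comes from isolating the dominant term of the series, and iterate starting from $\rho=2$; each iteration picks up one more order in $2^{-t}$, yielding \eqref{eq:rho-asymptotic} after four steps, with the error term controlled by the contraction estimate and Lemma~\ref{lemma:approximation-errors}. The residue at a simple pole $1/\rho$ of $N/D$ is $N(1/\rho)/D'(1/\rho)$; using Lemma~\ref{lemma:error-derivative} to replace $D'$ by $D_4'$ and the truncation bounds to replace $N,D$ by $N_4,D_4$, I would expand this quotient in powers of $2^{-t}$ to get \eqref{eq:dominant-residue} (the sign: the residue of $F$ is $-R$ because near $q=1/\rho$ one has $D(q)\approx D'(1/\rho)(q-1/\rho)$ and the coefficient asymptotics force $R>0$). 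The second pole $1/\rho_2$ is handled the same way but with a different fixed-point map, this time iterated in powers of $1/t$ rather than $2^{-t}$, giving \eqref{eq:second_pole_estimate_proof} and \eqref{eq:residue-second-pole}; one must separately verify $\rho>\rho_2>r_3$, i.e. that the second zero really does lie strictly inside $|q|=1/r_3$ and strictly outside $|q|=1/\rho$, which follows from comparing the leading terms $1+(\log 2)/t$ of $\rho_2$ and $r_3$ against the $\le 2$ size of $\rho$. For the small cases $2\le t\le 15$ (resp. $t=2$) the asymptotic formulas are replaced by the tabulated numerical values, obtained by solving $D_K=0$ numerically with a rigorous error bound from Lemma~\ref{lemma:approximation-errors}.

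Finally, the bound \eqref{eq:F-bound-r_3}: on $|q|=1/r_3$ I would bound $|N(q)|$ above and $|D(q)|$ below. The numerator is easy — $|N(q)|\le |N_K(q)|+|N(q)-N_K(q)|$ with both pieces small, since $|q|^{[j]}=r_3^{-[j]}$ decays doubly-exponentially in $j$, so $|N(q)|$ is bounded by an absolute constant plus lower-order corrections. The denominator lower bound $|D(q)|\ge |D_K(q)|-|D(q)-D_K(q)|$ is the delicate point: one needs $|D_K(q)|$ bounded away from $0$ on the whole circle $|q|=1/r_3$, and near the zero $1/\rho_2$ this smallest value is of order $1/t^4$ (reflecting that $1/\rho_2$ is only at distance $\sim 1/t$ from the circle and $D'$ there is $O(t)$-ish), which is exactly what forces the $5t^4$ on the right-hand side. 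Concretely I would factor $D_4(q)=(q-1/\rho)(q-1/\rho_2)\cdot(\text{unit})/\prod(1-q^{[j]})$ and estimate $|q-1/\rho_2|\ge c/t$ uniformly on the circle from the expansions of $\rho_2$ and $r_3$. The main obstacle is this uniform-on-the-circle lower bound for $|D|$: establishing that $D_K$ has no near-zeros other than the one accounted for, and that the constant works uniformly in $t\ge 2$ — this requires care in the transition region between the "large $t$" estimates and the explicit small-$t$ check, and is where most of the real work lies. Everything else is bookkeeping with geometric and doubly-geometric series.
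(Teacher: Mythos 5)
Your overall architecture (poles of $F$ are the uncancelled zeros of $D$ inside $|q|\le 1/r_3<1$, asymptotics by fixed-point iteration, residues as $N/D'$, and the final bound as an upper bound for $|N|$ against a lower bound for $|D|$ on the circle) matches the paper's, but there is a genuine gap at the central step. You reduce the ``exactly two simple poles'' claim to (i) a Rouch\'e comparison $|D-D_K|<|D_K|$ on $|q|=1/r_3$ and (ii) the assertion that the zeros of $D_K$ ``can be located by hand'', with exactly two of them inside the disk. Neither is substantiated: the numerator of $D_K$ over the common denominator $\prod_{j<K}(1-q^{[j]})$ is a polynomial of degree $\sum_{j=1}^{K-1}[j]$ (already $t^2+2t+3$ for $K=4$), so excluding all but two of its $\Theta(t^{K-2})$ zeros from the disk, and bounding $|D_K|$ below uniformly on the circle, is exactly the hard part --- which you yourself defer as ``where most of the real work lies''. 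The paper does not use Rouch\'e at all; it argues contrapositively by bootstrapping on a hypothetical zero $1/z$. For $|z|\ge 1+\log 2/t$, the recast equation \eqref{eq:bootstrap-equation} together with the tail bound \eqref{eq:denominator-approximation} forces $z=2+O(1.18^{-t})$ and then \eqref{eq:rho-asymptotic}; for $r_3\le|z|\le 1+\log 2/t$ one assumes only $|D(1/z)|\le 1/t^3$, solves for $z^{t+1}$, and must select a branch of the $(t+1)$-st root: the a priori estimate $|\arg z|\le 1.18/t$ in \eqref{eq:z-argument-bound} is what forces $\ell=0$ in \eqref{eq:inverse-bootstrap-with-argument}, and iterating then shows $|z|>r_3$, which yields simultaneously the location \eqref{eq:second_pole_estimate_proof} of $\rho_2$, the lower bound $|D(q)|\ge 1/t^3$ on $|q|=1/r_3$, and --- combined with the derivative estimates \eqref{eq:derivative-at-dominant-root} and $D'(1/\rho_2)=\tfrac{2}{\log 2}t^2+O(t)$ from Lemma~\ref{lemma:error-derivative} --- uniqueness and simplicity of each pole. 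Your sketch has no substitute for this argument-control/branch-selection step, and without it the count of zeros in the disk does not follow.

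Two further points. The quantitative picture behind your $5t^4$ bound is off: $\rho_2-r_3$ is of order $1/t^3$ (not $1/t$) and $D'$ near $1/\rho_2$ is of order $t^2$ (not $t$); the paper obtains $|D|\ge 1/t^3$ on the circle as a byproduct of the bootstrapping above and pairs it with $|N|\le 5t$. And for small $t$ ($2\le t<30$ in the paper), numerically solving $D_K=0$ with error bounds locates the two roots but does not exclude further poles in the disk; the paper needs a verified exhaustion --- interval-arithmetic subdivision of the relevant region into squares, discarding squares incompatible with \eqref{eq:bootstrap-equation} and the analogue of \eqref{eq:inverse-bootstrap-with-argument} --- to prove the ``exactly two'' statement there, something your plan of ``solving $D_K=0$ numerically'' does not provide.
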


The proof of Proposition~\ref{proposition:roots} relies on rewriting the
equation $D(q)=0$ into two fixed point equations, one for each of the two
poles. Inserting preliminary bounds into these fixed point equations improves
these bounds. This method is known as bootstrapping. The first pole is an
attracting fixed point of the first fixed point formulation, whereas the second
pole is a repellent fixed point of this first fixed point formulation. So we
need to take inverses in order to turn the second pole into an attracting fixed
point. However, inversion involves extracting a $(t+1)$-st root, so several
branches occur. Additional inequalities are required in order to decide which
branch to take. We repeatedly use power series estimates in order to get the
required inequalities. In order to sharpen these estimates, we assume that
$t\ge 30$.

\begin{proof}
  In the proof of this proposition, some more functions $\varepsilon_j(\ldots)$
  occur. We first allow complex values for the $\varepsilon_j(\ldots)$, it will
  later turn out that those occurring in Theorem~\ref{theorem:asymptotics} have
  only real values. 

  In the following, we consider the case $t\ge 30$.
  Assume that $1/z$ is a pole of $F(q)$ with $|z|\ge 1+a/t$ for some $2\ge a\ge \log
  2$. As $N(q)$ is holomorphic for $|q|<1$, cf.\ Lemma~\ref{lemma:approximation-errors}, $1/z$ must be a root of $D(q)$.
  Using $K=3$, we get
  \begin{equation*}
    0=1-\frac{1}{z-1}+\frac{1}{z-1}\frac{1}{z^{t+1}-1}+(D(1/z)-D_3(1/z)),
  \end{equation*}
  which is equivalent to
  \begin{equation}\label{eq:bootstrap-equation}
    2-z=\frac1{z^{t+1}-1}+ (z-1)(D(1/z)-D_3(1/z)).
  \end{equation}
  Taking absolute values, \eqref{eq:denominator-approximation} yields
  \begin{equation}\label{eq:bootstrap}
    2-|z|\le |2-z|\le \frac1{|z|^{[2]}-1}\left(1+\frac1{|z|^{[3]}-1}\cdot\frac1{1-\frac1{|z|^{[4]}-1}}\right).
  \end{equation}
  We have
  \begin{equation}\label{eq:r-2-estimate}
    \begin{aligned}
      |z|^{[2]}&\ge \left(1+\frac at\right)^{t+1}=\exp\left((t+1)\log\left(1+\frac at\right)\right)
      \ge\exp\left((t+1)\left(\frac at-\frac{a^2}{2t^2}\right)\right)\\
      &=\exp\left(a+\frac {a-a^2/2}t-\frac{a^2}{2t^2}\right)\ge\exp\left(a+\frac bt\right) 
      \ge e^a\left(1+\frac bt\right)
    \end{aligned}
  \end{equation}
  for $b=a-31a^2/60>0$. By \eqref{eq:r_3_3_estimate} and
  \eqref{eq:r_3_4_estimate}, we have
  \begin{equation}\label{eq:r-3-r-4-first-estimate}
    \frac{1}{|z|^{[3]}-1}\cdot \frac1{1-\frac1{|z|^{[4]}-1}}\le \frac{1.00001}{2^t}.
  \end{equation}
  Consider now the case $a=\log 2$. Then \eqref{eq:bootstrap}, \eqref{eq:r-2-estimate} and
  \eqref{eq:r-3-r-4-first-estimate} yield
  \begin{equation}\label{eq:first-bootstrap}
    2-|z|\le \frac1{1+\frac{2b}t}\left(1+\frac{1.00001}{2^t}\right)\le 1-\frac4{5t}.
  \end{equation}
  We conclude that $|z|\ge 1+\frac4{5t}$. So using now $a=4/5$, \eqref{eq:bootstrap}, \eqref{eq:r-2-estimate} and
  \eqref{eq:r-3-r-4-first-estimate} yield
  \begin{equation*}
    2-|z|\le \frac1{e^{4/5}-1}\left(1+\frac{1.00001}{2^t} \right)\le
    0.82
  \end{equation*}
  and therefore $|z|\ge 1.18$. Inserting this and \eqref{eq:r-3-r-4-first-estimate} in \eqref{eq:bootstrap} now
  yields
  \begin{equation*}
    2-|z|\le |2-z|\le  \frac1{1.18^{t+1}-1}\left(1+\frac{1.00001}{2^t} \right)\le \frac{0.86}{1.18^t}.
  \end{equation*}
  We conclude that $z=2+O(1.18^{-t})$. We now rewrite
  \eqref{eq:bootstrap-equation} as
  \begin{equation}\label{eq:bootstrap-final-equation}
    z=2-\frac1{z^{t+1}-1}+ O(2^{-t^2}).
  \end{equation}
  Inserting $z=2+O(1.18^{-t})$ in the right-hand side of
  \eqref{eq:bootstrap-final-equation}
  yields
  \begin{equation*}
    z=2-\frac1{(2+O(1.18^{-t}))^{t+1}-1}=2-\frac1{2^{t+1}}\left(1+O(t \, 1.18^{-t})\right).
  \end{equation*}
  We now repeat the process: We insert this estimate in the right-hand side of
  \eqref{eq:bootstrap-final-equation} and get a better estimate. After a few
  iterations (and taking care of all implicit constants), we finally get
  \eqref{eq:rho-asymptotic}. Inserting the lower and the upper bounds of
  \eqref{eq:rho-asymptotic} into $D_3(q)$ (and taking into account
  $D(q)-D_3(q)$), we see that $D(q)$ changes sign within the interval, so there
  is certainly a root $1/z$ of $D(q)$ fulfilling \eqref{eq:rho-asymptotic}.

  Inserting this asymptotic expression into $D'(q)$ and using Lemma~\ref{lemma:error-derivative}, we get
  \begin{equation}\label{eq:derivative-at-dominant-root}
    |D'(1/z)+4|\le 1.04 t 2^{-t}
  \end{equation}
  for $t\ge 30$. This shows that there is at most one zero of $D(1/z)$ within the
  bounds of the asymptotic expression~\eqref{eq:rho-asymptotic}: if there were
  two, say $1/z_1$ and $1/z_2$, then 

\begin{align*}
    4\left|\frac1{z_2}-\frac1{z_1}\right|&=
    \left|D(1/z_2)-D(1/z_1)+4\left(\frac1{z_2}-\frac1{z_1}\right)\right|\\
    &=\left|\int_{[1/z_1,1/z_2]}(D'(q)+4)\,dq\right|\le 1.04t2^{-t}\left|\frac1{z_2}-\frac1{z_1}\right|,
  \end{align*}
  which implies $1/z_1=1/z_2$. Here, we integrate over the straight line from
  $1/z_1$ to $1/z_2$. The estimate
  \eqref{eq:derivative-at-dominant-root} also shows that there can only be a
  simple root. Thus we have shown that the only root $1/z$ of $D$ with $|z|\ge
  1+\log2/t$ is a simple zero with $z$ as in \eqref{eq:rho-asymptotic}. The
  residue \eqref{eq:dominant-residue} follows upon inserting
  \eqref{eq:rho-asymptotic} into $N(1/z)/D'(1/z)$. Note that this also shows
  that the dominant zero of the denominator does not cancel out against a zero
  of the numerator.

  Now assume that $|D(1/z)|\le 1/t^3$ holds for some $z$ with $r_3\le |z|\le 1+{\log 2}/t$.
  Inserting these bounds into \eqref{eq:bootstrap}, we get
{\refstepcounter{epsilon}\label{epsilon_6a}
  \begin{equation}\label{eq:bound-2-r-3}
    |z-2|\le 1-\frac{\log2}{t} + \frac{4 \log^3 2 - 3\log^22 + 12\log 2}{12t^2}
    +\frac{1.5}{t^3}\varepsilon_{\ref{epsilon_6a}}(t,z)=: r'.
  \end{equation}
}  The intersection point with positive imaginary part of the circle of radius
  $1+\log 2/t$ centred at the origin with the circle of radius $r'$ centred
  at $2$ is denoted by $\xi$. We obtain
  \begin{equation*}
    \xi=1+\frac{4\log2+i\sqrt{\frac{16}3\log^32 - 4\log^22 + 16\log2} }{4t} +
\frac{2.23}{t^2}\neweps(t).
  \end{equation*}
  In particular, we have
  \begin{equation}\label{eq:z-1-difference-bound}
    |z-1|\le |\xi-1|\le \frac{1.14}t
  \end{equation}
  and
  \begin{equation}\label{eq:z-argument-bound}
    |\arg(z)|\le |\arg \xi|\le |\log \xi|\le \frac{1.18}t.
  \end{equation}

  As $|D(1/z)|\le1/t^3$, we have (after multiplication with $z-1$)
  \begin{equation*}
    0=z-2+\frac{1}{z^{t+1}-1}+\frac{2.01}{t^3}\neweps\label{epsilon_9}(t,z).
  \end{equation*}
  Solving for $z^{t+1}$ yields
  \begin{equation*}
    z^{t+1}=1+\frac1{2-z-\frac{2.01}{t^3}\varepsilon_{\ref{epsilon_9}}(t,z)}.
  \end{equation*}
  As $z=1+\frac{1.14}t\neweps(t,z)$ by \eqref{eq:z-1-difference-bound}, we obtain
  \begin{equation*}
    z^{t+1}=2+\frac{1.19}t\neweps\label{epsilon_11}(t,z).
  \end{equation*}
  We conclude that
  \begin{equation}\label{eq:inverse-bootstrap-with-argument}
    z=\exp\left(\frac{2\ell\pi i}{t+1}+\frac{1}{t+1}\log\left(2+\frac{1.19}t\varepsilon_{\ref{epsilon_11}}(t,z)\right)\right)
  \end{equation}
  for some integer $\ell$ with $-\frac{t+1}2<\ell\le \frac{t+1}{2}$. In
  particular, we have
  \begin{equation*}
    \arg z=\frac{2\ell\pi}{t+1}+\frac{1}{t+1}\Im\log\left(1+\frac{1.19}{2t}\varepsilon_{\ref{epsilon_11}}(t,z)\right),
  \end{equation*}
  which, in view of \eqref{eq:z-argument-bound}, implies $\ell=0$. Thus
  \eqref{eq:inverse-bootstrap-with-argument} simplifies to
  \begin{equation*}
    z=\exp\left(\frac{1}{t+1}\log\left(2+\frac{1.19}t\varepsilon_{\ref{epsilon_11}}(t,z)\right)\right)=
    1+\frac{\log2}t+\frac{1.63}{t^2}\neweps(t,z).
  \end{equation*}
  We may now repeat the argument a few times to finally obtain
  \begin{equation*}
    z=1+\frac{\log 2}{t} - \frac{\log 2-\log^2 2}{2t^2} + \frac{4\log^32 + 3\log^22 +
6\log2}{24t^3} + \frac{3.45}{t^4}\neweps(t,z).
  \end{equation*}
  Thus we have $|z|>r_3$. We have therefore shown that
  \begin{equation*}
    |D(q)|\ge \frac1{t^3}\qquad\text{for}\qquad |q|=1/r_3.
  \end{equation*}
  So we now assume that $D(1/z)=0$ for some $z$ with $r_3\le
  |z|<1+\log2/t$. Repeating the above steps with $1/t^3$ replaced by $0$ gives
  the slightly better bound $z=\rho_2$ with $\rho_2$ as in \eqref{eq:second_pole_estimate_proof}.
  
  Inserting the real upper and lower bounds implied by
  \eqref{eq:second_pole_estimate_proof} into $D_3(q)$ and taking the error
  $D(q)-D_3(q)$ into account shows that the sign of $D(q)$ changes sign in this
  interval, so there is a real root $1/z=1/\rho_2$ of $D(q)$ fulfilling
  \eqref{eq:second_pole_estimate_proof}.

  For the $z$ in \eqref{eq:second_pole_estimate_proof}, we get
  \begin{equation*}
    D'(1/z)=\frac{2}{\log 2} t^2+1.07t\neweps(t,z),
  \end{equation*}
  which implies that there is exactly one simple zero $1/z$ of $D(q)$ with $z$ fulfilling
  \eqref{eq:second_pole_estimate_proof}. By the same argument as above, this is the only zero
  $1/z$ with $r_3\le |z|<1+\log2/t$. Computing $N(1/z)/D'(1/z)$ finally yields
  the residue given in \eqref{eq:residue-second-pole}.

  We already know that $|D(q)|\ge 1/t^3$ for all $q$ with $|q|=1/r_3$. We also
  get $|N(q)|\le 5t$. This yields \eqref{eq:F-bound-r_3}.

  We now turn to the case $2\le t<30$. Here, the asymptotic estimates can be
  replaced by concrete numbers. All assertions have been proved using the 
  interval arithmetic built in in Sage~\cite{Stein-others:2011:sage-mathem}.
  First, we computed an estimate analogous to \eqref{eq:bound-2-r-3}. The
  corresponding neighbourhood of $2$ is subdivided in squares. Each of these
  squares is intersected with its image under \eqref{eq:bootstrap-equation}
  and the union of its images under the corresponding analogon to
  \eqref{eq:inverse-bootstrap-with-argument}. If this intersection is empty or
  the square has no point of absolute value at least $r_3$, the square is
  discarded. Otherwise, the square is replaced by the smallest square
  containing the mentioned intersection. If this does not yield sufficient
  progress, the squares have been ``bisected'' into four squares. After a
  certain number of operations, there are only two small regions which might
  contain a root. Estimating the derivative, we see that there is at most one
  root in each of these regions. As it is suspected that these roots are real,
  the real bisection method is employed to determine the roots with higher
  precision. The approximation errors $D(q)-D_K(q)$ can also be handled by
  adding the corresponding interval in the interval arithmetic.
\end{proof}

We are now able to prove Theorem~\ref{theorem:asymptotics}.

\begin{proof}[Proof of Theorem~\ref{theorem:asymptotics}]
  This is a consequence of singularity analysis
  \cite{Flajolet-Odlyzko:1990:singul}, cf.\ also
  \cite{Flajolet-Sedgewick:ta:analy}.

  In this simple case, this also follows from Cauchy's integral formula and the
  residue theorem (and Proposition~\ref{proposition:roots}):
  \begin{equation*}
\varepsilon_1(t,n)5t^4r_3^{n}=
\frac1{2\pi i}\oint_{|q|=1/r_3} \frac{F(q)}{q^{n+1}}\,dq=
-R\rho^{n+1}-R_2\rho_2^{n+1}+c_n.
  \end{equation*}
\end{proof}

\section{Acknowledgements}
C. Heuberger is supported by the Austrian Science Fund (FWF): S09606, that is part of the Austrian National Research Network “Analytic Combinatorics and Probabilistic Number Theory.”
This paper was partly written while C. Heuberger was a visitor at Stellenbosch University.

\bibliography{cheub}
\bibliographystyle{amsplain}

\end{document}